\documentclass{article}

\parindent=20pt
\textwidth 156 mm
\textheight 220 mm
\topmargin -10pt
\evensidemargin 10pt
\oddsidemargin 10pt
\frenchspacing

\usepackage{cmbright}
\usepackage{amssymb,amsmath,amsthm}
\usepackage{mathrsfs}
\usepackage{epsfig}
\usepackage[usenames,dvipsnames]{color}

%\bibliographystyle{plain}

% macros pour des locutions:

% macros pour des lettres:

\def\B{\mathscr B}
\def\C{\mathbb C}
\def\d{\mathrm{d}}
\def\D{\mathscr D}

\def\H{\mathcal H}

\def\N{\mathbb N}

\def\R{\mathbb R}

\def\U{\mathscr U}

\def\Z{\mathbb Z}

% macros pour des parentheses et symboles:

\def\dom{\mathcal D}

\def\e{\mathop{\mathrm{e}}\nolimits}
\def\id{\mathop{\mathrm{id}}\nolimits}

\def\ltwo{\mathsf{L}^{\:\!\!2}}
\def\lone{\mathsf{L}^{\:\!\!1}}
\DeclareMathOperator*{\slim}{s\;\!-lim\;\!}

% macros pour des entetes:

\newtheorem{Theorem}{Theorem}[section]
\newtheorem{Remark}[Theorem]{Remark}

\newtheorem{Corollary}[Theorem]{Corollary}
\newtheorem{Proposition}[Theorem]{Proposition}

\newtheorem{Example}[Theorem]{Example}

% pour la numerotation des equations:

\begin{document}

%--------------------------------------------------------------------------------------
% Title
%--------------------------------------------------------------------------------------

\title{Commutator criteria for strong mixing II.\\More general and simpler}

\author{S. Richard$^1$\footnote{Supported by JSPS Grant-in-Aid for Young Scientists A
no 26707005.}~~and R. Tiedra de Aldecoa$^2$\footnote{Supported by the Chilean Fondecyt
Grant 1130168 and by the Iniciativa Cientifica Milenio ICM RC120002 ``Mathematical
Physics'' from the Chilean Ministry of Economy.}}

\date{\small}
\maketitle
\vspace{-1cm}

\begin{quote}
\emph{
\begin{itemize}
\item[$^1$] Graduate school of mathematics, Nagoya University,
Chikusa-ku, Nagoya 464-8602, Japan; On leave of absence from
Universit\'e de Lyon; Universit\'e
Lyon 1; CNRS, UMR 5208, Institut Camille Jordan,
43 blvd du 11 novembre 1918, F-69622
Villeurbanne-Cedex, France
\item[$^2$] Facultad de Matem\'aticas, Pontificia Universidad Cat\'olica de Chile,\\
Av. Vicu\~na Mackenna 4860, Santiago, Chile
\item[] \emph{E-mails:} richard@math.nagoya-u.ac.jp, rtiedra@mat.puc.cl
\end{itemize}
}
\end{quote}

%--------------------------------------------------------------------------------------

\begin{abstract}
We present a new criterion, based on commutator methods, for the strong mixing
property of unitary representations of topological groups equipped with a proper
length function. Our result generalises and unifies recent results on the strong
mixing property of discrete flows $\{U^N\}_{N\in\Z}$ and continuous flows
$\{\e^{-itH}\}_{t\in\R}$ induced by unitary operators $U$ and self-adjoint operators
$H$ in a Hilbert space. As an application, we present a short alternative proof (not
using convolutions) of the strong mixing property of the left regular representation
of $\sigma$-compact locally compact groups.
\end{abstract}

\textbf{2010 Mathematics Subject Classification:} 22D10, 37A25, 58J51, 81Q10.

\smallskip

\textbf{Keywords:} Strong mixing, unitary representations, commutator methods.

%--------------------------------------------------------------------------------------
\section{Introduction}
\setcounter{equation}{0}
%--------------------------------------------------------------------------------------

In the recent paper \cite{Tie15_2}, itself motivated by the previous papers
\cite{FRT13,Tie14,Tie12,Tie15}, it has been shown that commutator methods for unitary
and self-adjoint operators can be used to establish strong mixing. The main results of
\cite{Tie15_2} are the following two commutator criteria for strong mixing. First,
given a unitary operator $U$ in a Hilbert space $\H$, assume there exists an auxiliary
self-adjoint operator $A$ in $\H$ such that the commutators $[A,U^N]$ exist and are
bounded in some precise sense, and such that the strong limit
\begin{equation}\label{D_1}
D_1:=\slim_{N\to\infty}\frac1N[A,U^N]U^{-N}
\end{equation}
exists. Then, the discrete flow $\{U^N\}_{N\in\Z}$ is strongly mixing in
$\ker(D_1)^\perp$. Second, given a self-adjoint operator $H$ in $\H$, assume there
exists an auxiliary self-adjoint operator $A$ in $\H$ such that the commutators
$[A,\e^{-itH}]$ exist and are bounded in some precise sense, and such that the strong
limit
\begin{equation}\label{D_2}
D_2:=\slim_{t\to\infty}\frac1t[A,\e^{-itH}]\e^{itH}
\end{equation}
exists. Then, the continuous flow $\{\e^{-itH}\}_{t\in \R}$ is strongly mixing in
$\ker(D_2)^\perp$. These criteria were then applied to skew products of compact Lie
groups, Furstenberg-type transformations, time changes of horocycle flows and
adjacency operators on graphs.

The purpose of this note is to unify these two commutator criteria into a single, more
general, commutator criterion for strong mixing of unitary representations of
topological groups, and also to remove an unnecessary invariance assumption made in
\cite{Tie15_2}.

Our main result is the following. We consider a topological group $X$ equipped with a
proper length function $\ell:X\to\R_+$, a unitary representation $U:X\to\U(\H)$, and a
net $\{x_j\}_{j\in J}$ in $X$ with $x_j\to\infty$ (see Section \ref{sec_comm} for
precise definitions). Also, we assume there exists an auxiliary self-adjoint operator
$A$ in $\H$ such that the commutators $[A,U(x_j)]$ exist and are bounded in some
precise sense, and such that the strong limit
\begin{equation}\label{D_3}
D:=\slim_j\frac1{\ell(x_j)}[A,U(x_j)]U(x_j)^{-1}
\end{equation}
exists. Then, under these assumptions we show that the unitary representation $U$ is
strongly mixing in $\ker(D)^\perp$ along the net $\{x_j\}_{j\in J}$ (Theorem
\ref{Theorem_groups}). As a corollary, we obtain criteria for strong mixing in the
cases of unitary representations of compactly generated locally compact Hausdorff
groups (Corollary \ref{Corol_generated}) and the Euclidean group $\R^d$ (Corollary
\ref{Corol_R^d}). These results generalise the commutator criteria of \cite{Tie15_2}
for the strong mixing of discrete and continuous flows, as well as the strong limit
\eqref{D_3} generalises the strong limits \eqref{D_1} and \eqref{D_2} (see Remarks
\ref{rem_Z} and \ref{rem_R}). To conclude, we present in Example \ref{ex_regular} an
application which was not possible to cover with the results of \cite{Tie15_2}: a
short alternative proof (not using convolutions) of the strong mixing property of the
left regular representation of $\sigma$-compact locally compact Hausdorff groups.

We refer the reader to \cite{BR88,CCLTV11,HM79,LM92,Sch84,Zim84} for references on
strong mixing properties of unitary representations of groups.\\

\noindent
{\bf Acknowledgements.} The second author is grateful for the support and the
hospitality of the Graduate School of Mathematics of Nagoya University in March and
April 2015.

%--------------------------------------------------------------------------------------
\section{Commutator criteria for strong mixing}\label{sec_comm}
\setcounter{equation}{0}
%--------------------------------------------------------------------------------------

We start with a short review of basic facts on commutators of operators and regularity
classes associated with them. We refer to \cite[Chap.~5-6]{ABG96} for more details.

Let $\H$ be an arbitrary Hilbert space with scalar product
$\langle\;\!\cdot\;\!,\;\!\cdot\;\!\rangle$ antilinear in the first argument, denote
by $\B(\H)$ the set of bounded linear operators on $\H$, and write $\|\cdot\|$ both
for the norm on $\H$ and the norm on $\B(\H)$. Let $A$ be a self-adjoint operator in
$\H$ with domain $\dom(A)$, and take $S\in\B(\H)$. For any $k\in\N$, we say that $S$
belongs to $C^k(A)$, with notation $S\in C^k(A)$, if the map
\begin{equation}\label{eq_group}
\R\ni t\mapsto\e^{-itA}S\e^{itA}\in\B(\H)
\end{equation}
is strongly of class $C^k$. In the case $k=1$, one has $S\in C^1(A)$ if and only if
the quadratic form
$$
\dom(A)\ni\varphi\mapsto\big\langle\varphi,iSA\hspace{1pt}\varphi\big\rangle
-\big\langle A\hspace{1pt}\varphi,iS\varphi\big\rangle\in\C
$$
is continuous for the topology induced by $\H$ on $\dom(A)$. We denote by $[iS,A]$ the
bounded operator associated with the continuous extension of this form, or
equivalently the strong derivative of the map \eqref{eq_group} at $t=0$. Moreover, if
we set $A_\varepsilon:=(i\varepsilon)^{-1}(\e^{i\varepsilon A}-1)$ for
$\varepsilon\in\R\setminus\{0\}$, we have (see \cite[Lemma~6.2.3(a)]{ABG96}):
\begin{equation}\label{eq_com_1}
\slim_{\varepsilon\searrow0}[iS,A_\varepsilon]=[iS,A].
\end{equation}

Now, if $H$ is a self-adjoint operator in $\H$ with domain $\dom(H)$ and spectrum
$\sigma(H)$, we say that $H$ is of class $C^k(A)$ if $(H-z)^{-1}\in C^k(A)$ for some
$z\in\C\setminus\sigma(H)$. In particular, $H$ is of class $C^1(A)$ if and only if the
quadratic form
$$
\dom(A)\ni\varphi\mapsto
\big\langle\varphi,(H-z)^{-1}A\hspace{1pt}\varphi\big\rangle
-\big\langle A\hspace{1pt}\varphi,(H-z)^{-1}\varphi\big\rangle\in\C
$$
extends continuously to a bounded form with corresponding operator denoted by
$[(H-z)^{-1},A]\in\B(\H)$. In such a case, the set $\dom(H)\cap\dom(A)$ is a core for
$H$ and the quadratic form
$$
\dom(H)\cap\dom(A)\ni\varphi\mapsto\big\langle H\varphi,A\hspace{1pt}\varphi\big\rangle
-\big\langle A\hspace{1pt}\varphi,H\varphi\big\rangle\in\C
$$
is continuous in the topology of $\dom(H)$ (see \cite[Thm.~6.2.10(b)]{ABG96}). This
form then extends uniquely to a continuous quadratic form on $\dom(H)$ which can be
identified with a continuous operator $[H,A]$ from $\dom(H)$ to the adjoint space
$\dom(H)^*$. In addition, the following relation holds in $\B(\H)$ (see
\cite[Thm.~6.2.10(b)]{ABG96}):
\begin{equation}\label{eq_com_2}
[(H-z)^{-1},A]=-(H-z)^{-1}[H,A](H-z)^{-1}.
\end{equation}

With this, we can now present our first result, which is at the root of the new
commutator criterion for strong mixing. For it, we recall that a net
$\{x_j\}_{j\in J}$ in a topological space $X$ diverges to infinity, with notation
$x_j\to\infty$, if $\{x_j\}_{j\in J}$ has no limit point in $X$. This implies that for
each compact set $K\subset X$, there exists $j_K\in J$ such that $x_j\notin K$ for
$j\ge j_K$. In particular, $X$ is not compact. We also fix the notations $\U(\H)$ for
the set of unitary operators on $\H$ and $\R_+:=[0,\infty)$.

\begin{Proposition}\label{Prop_general}
Let $\{U_j\}_{j\in J}$ be a net in $\U(\H)$, let $\{\ell_j\}_{j\in J}\subset\R_+$
satisfy $\ell_j\to\infty$, assume there exists a self-adjoint operator $A$ in $\H$
such that $U_j\in C^1(A)$ for each $j\in J$, and suppose that the strong limit
$$
D:=\slim_j\frac1{\ell_j}[A,U_j]U_j^{-1}
$$
exists. Then, $\lim_j\big\langle\varphi,U_j\psi\big\rangle=0$ for all
$\varphi\in\ker(D)^\perp$ and $\psi\in\H$.
\end{Proposition}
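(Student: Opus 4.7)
The plan is to first show that each operator $T_j := \frac{1}{\ell_j}[A,U_j]U_j^{-1}$ is self-adjoint, from which it will follow that the strong limit $D$ is also self-adjoint and that $\ker(D)^\perp = \overline{\Ran(D)}$. The proposition will then reduce to proving $\lim_j\langle D\varphi, U_j\psi\rangle = 0$ for every $\varphi,\psi\in\H$.

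To see that $T_j = T_j^*$, I would combine two standard facts about the class $C^1(A)$: a Leibniz rule giving $[A,U_jU_j^{-1}]=[A,U_j]U_j^{-1}+U_j[A,U_j^{-1}]=0$, and the adjoint identity $[A,U_j^{-1}]=-[A,U_j]^*$ (both available from \cite{ABG96}). Together these yield $[A,U_j]U_j^{-1}=U_j[A,U_j]^*$, and the right-hand side is manifestly the adjoint of the left-hand side, so $T_j^*=T_j$. The strong convergence $T_j\to D$ (with $\sup_j\|T_j\|<\infty$ by the uniform boundedness principle) then forces $D=D^*$.

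The heart of the argument is the following vanishing estimate. For $\varphi,\psi\in\dom(A)$, the form definition of $[A,U_j]$ gives
\begin{equation*}
\langle T_j\varphi, U_j\psi\rangle = \frac{1}{\ell_j}\bigl\langle \varphi, [A,U_j]\psi\bigr\rangle = \frac{1}{\ell_j}\bigl(\langle A\varphi, U_j\psi\rangle - \langle \varphi, U_j A\psi\rangle\bigr),
\end{equation*}
which in modulus is bounded by $\ell_j^{-1}(\|A\varphi\|\|\psi\|+\|\varphi\|\|A\psi\|)$ and hence tends to $0$. A three-$\varepsilon$ approximation combining this pointwise vanishing with the density of $\dom(A)$ in $\H$ and the uniform bound $\sup_j\|T_j\|<\infty$ extends the conclusion to $\lim_j\langle T_j\varphi, U_j\psi\rangle = 0$ for all $\varphi,\psi\in\H$.

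To finish, decompose $\langle D\varphi, U_j\psi\rangle = \langle (D-T_j)\varphi, U_j\psi\rangle + \langle T_j\varphi, U_j\psi\rangle$. Both terms vanish in the limit: the first because $\|(D-T_j)\varphi\|\|\psi\|\to 0$ (using strong convergence and $\|U_j\psi\|=\|\psi\|$), and the second by the previous step. A final density argument invoking $\ker(D)^\perp=\overline{\Ran(D)}$ then completes the proof. The main technical hurdle is the self-adjointness of $T_j$: establishing it cleanly requires careful handling of the Leibniz and adjoint identities for commutators in $C^1(A)$, while all other steps are routine limiting arguments.
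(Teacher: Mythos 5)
Your proof follows essentially the same route as the paper's: the same decomposition $\langle D\varphi,U_j\psi\rangle=\langle(D-T_j)\varphi,U_j\psi\rangle+\langle T_j\varphi,U_j\psi\rangle$, the same $O(1/\ell_j)$ bound on the commutator term obtained from the form definition of $[A,U_j]$ on $\dom(A)$, and the same closing density arguments; your explicit verification that $T_j=T_j^*$ (via the Leibniz rule and $[A,U_j^{-1}]=-[A,U_j]^*$) is a welcome justification of a fact the paper merely asserts in the remark preceding its proof. One technical misstep: since $\{T_j\}_{j\in J}$ is a net rather than a sequence, strong convergence does not give $\sup_j\|T_j\|<\infty$ via the uniform boundedness principle (a strongly convergent net need not even be pointwise bounded), so your three-$\varepsilon$ extension of $\lim_j\langle T_j\varphi,U_j\psi\rangle=0$ to all $\varphi,\psi\in\H$ is unjustified as stated. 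The gap is harmless because that extension is never needed: run the decomposition only for $\varphi,\psi\in\dom(A)$, and perform all approximations at the very end using the bounded operator $D$ (density of $D\:\!\dom(A)$ in $\overline{\Ran(D)}=\ker(D)^\perp$ and of $\dom(A)$ in $\H$), which is exactly how the paper concludes.
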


Before the proof, we note that for $j\in J$ large enough (so that $\ell_j\neq 0$) the
operators $\frac1{\ell_j}[A,U_j]U_j^{-1}$ are well-defined, bounded and self-adjoint.
Therefore, their strong limit $D$ is also bounded and self-adjoint.

\begin{proof}
Let $\varphi=D\widetilde\varphi\in D\hspace{1pt}\dom(A)$ and $\psi\in\dom(A)$,
take $j\in J$ large enough, and set
$$
D_j:=\frac1{\ell_j}[A,U_j]U_j^{-1}.
$$
Since $U_j$ and $U_j^{-1}$ belong to $C^1(A)$ (see \cite[Prop.~5.1.6(a)]{ABG96}), both
$U_j \psi$ and $U_j^{-1}\widetilde\varphi$ belong to $\dom(A)$. Thus,
\begin{align*}
&\big|\big\langle\varphi,U_j\psi\big\rangle\big|\\
&=\big|\big\langle(D-D_j)\widetilde\varphi,U_j\psi\big\rangle
+\big\langle D_j\widetilde\varphi,U_j\psi\big\rangle\big|\\
&\le\big\|(D-D_j)\widetilde\varphi\big\|\!\;\|\psi\|
+\frac1{\ell_j}\big|\big\langle\big[A,U_j\big]U_j^{-1}\widetilde\varphi,
U_j\psi\big\rangle\big|\\
&\le\big\|(D-D_j)\widetilde\varphi\big\|\;\!\|\psi\|
+\frac1{\ell_j}\big|\big\langle A\widetilde\varphi,U_j\psi\big\rangle\big|
+\frac1{\ell_j}\big|\big\langle U_jAU_j^{-1}\widetilde\varphi,
U_j\psi\big\rangle\big|\\
&\le\big\|(D-D_j)\widetilde\varphi\big\|\;\!\|\psi\|
+\frac1{\ell_j}\big\|A\widetilde\varphi\big\|\;\!\|\psi\|
+\frac1{\ell_j}\big\|\widetilde\varphi\big\|\;\!\|A\psi\|.
\end{align*}
Since $D=\slim_jD_j$ and $\ell_j\to\infty$, we infer that
$\lim_j\big\langle\varphi,U_j\psi\big\rangle=0$, and thus the claim follows by the
density of $D\hspace{1pt}\dom(A)$ in $\overline{D\hspace{1pt}\H}=\ker(D)^\perp$ and
the density of $\dom(A)$ in $\H$.
\end{proof}

In the sequel, we assume that the unitary operators $U_j$ are given by a unitary
representation of a topological group $X$. We also assume that the scalars $\ell_j$
are given by a proper length function on $X$, that is, a function $\ell:X\to\R_+$
satisfying the following properties ($e$ denotes the identity of $X$):
\begin{enumerate}
\item[(L1)] $\ell(e)=0$,
\item[(L2)] $\ell(x^{-1})=\ell(x)$ for all $x\in X$,
\item[(L3)] $\ell(xy)\le\ell(x)+\ell(y)$ for all $x,y\in X$,
\item[(L4)] if $K\subset\R_+$ is compact, then $\ell^{-1}(K)\subset X$ is
relatively compact.
\end{enumerate}

\begin{Remark}[Topological groups with a proper left-invariant pseudo-metric]\label{remark_pseudo}
Let $X$ be a Hausdorff topological group equipped with a proper left-invariant
pseudo-metric $d:X\times X\to\R_+$ (see \cite[Def.~2.A.5 \& 2.A.7]{dd14}). Then,
simple calculations show that the associated length function $\ell:X\to\R_+$ given by
$\ell(x):=d(e,x)$ satisfies the properties (L1)-(L4) above. Examples of groups
admitting a proper left-invariant pseudo-metric are $\sigma$-compact locally compact
Hausdorff groups \cite[Prop.~4.A.2]{dd14}, as for instance compactly generated locally
compact Hausdorff groups with the word metric \cite[Prop.~4.B.4(2)]{dd14}.
\end{Remark}

The next theorem provides a general commutator criterion for the strong mixing
property of a unitary representation of a topological group. Before stating it, we
recall that if a topological group $X$ is equipped with a proper length function
$\ell$, and if $\{x_j\}_{j\in J}$ is a net in $X$ with $x_j\to\infty$, then
$\ell(x_j)\to\infty$ (this can be shown by absurd using the property (L4) above).

\begin{Theorem}[Topological groups]\label{Theorem_groups}
Let $X$ be a topological group equipped with a proper length function $\ell$, let
$U:X\to\U(\H)$ be a unitary representation of $X$, let $\{x_j\}_{j\in J}$ be a net in
$X$ with $x_j\to\infty$, assume there exists a self-adjoint operator $A$ in $\H$ such
that $U(x_j)\in C^1(A)$ for each $j\in J$, and suppose that the strong limit
\begin{equation}\label{def_de_D}
D:=\slim_j\frac1{\ell(x_j)}[A,U(x_j)]U(x_j)^{-1}
\end{equation}
exists. Then,
\begin{enumerate}
\item[(a)] $\lim_j\big\langle\varphi,U(x_j)\psi\big\rangle=0$ for all
$\varphi\in\ker(D)^\perp$ and $\psi\in\H$,
\item[(b)] $U$ has no nontrivial finite-dimensional unitary subrepresentation in
$\ker(D)^\perp$.
\end{enumerate}
\end{Theorem}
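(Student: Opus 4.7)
The plan is to deduce part (a) directly from Proposition~\ref{Prop_general} and to derive part (b) from (a) by a short finite-dimensional trick.

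For part (a), I would set $U_j := U(x_j)$ and $\ell_j := \ell(x_j)$ and simply invoke Proposition~\ref{Prop_general}. The only clause of its hypothesis that is not immediate is the divergence $\ell_j\to\infty$, which is the statement recalled just before the theorem. To justify it I would argue by contradiction: if some subnet $\{\ell(x_{j'})\}$ were bounded above by some $M$, then the corresponding $\{x_{j'}\}$ would lie in the relatively compact set $\ell^{-1}([0,M])$ by (L4), hence in a compact $K\subset X$, contradicting the characterisation of $x_j\to\infty$ (eventually no $x_j$ lies in any given compact set). With $\ell_j\to\infty$ in hand, Proposition~\ref{Prop_general} delivers the conclusion.

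For part (b), I would argue by contradiction. Suppose $U$ admits a nonzero finite-dimensional $U$-invariant subspace $V\subset\ker(D)^\perp$, and fix an orthonormal basis $\{e_1,\dots,e_n\}$ of $V$. Since $V$ is $U$-invariant we have $U(x_j)e_k\in V$, and Parseval's identity in $V$ gives
$$
1 = \|U(x_j)e_k\|^2 = \sum_{i=1}^n\big|\big\langle e_i,U(x_j)e_k\big\rangle\big|^2
$$
for every $j$. On the other hand, each $e_i$ belongs to $\ker(D)^\perp$, so part (a) forces $\langle e_i,U(x_j)e_k\rangle\to 0$ for every $i$; since the sum is finite, its limit is $0$, contradicting the left-hand side being $1$. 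Hence no such $V$ can exist.

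Neither step looks like a real obstacle: the analytic content of the theorem is already packaged inside Proposition~\ref{Prop_general}, and the only extra input is the (standard) implication $x_j\to\infty \;\Rightarrow\; \ell(x_j)\to\infty$ coming from properness of $\ell$. If anything, the delicate point is simply keeping careful track of what "$x_j\to\infty$" means for a net in a non-metrisable topological group, but this is resolved once one uses the compact-exclusion reformulation recalled in the excerpt.
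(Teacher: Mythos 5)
Your proof is correct and follows essentially the same route as the paper: part (a) is exactly the paper's argument (Proposition~\ref{Prop_general} plus the implication $x_j\to\infty\Rightarrow\ell(x_j)\to\infty$ from properness), and for part (b) your Parseval argument is precisely the standard proof of the fact the paper simply cites, namely that matrix coefficients of finite-dimensional unitary representations do not vanish at infinity. No gaps.
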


\begin{proof}
The claim (a) follows from Proposition \ref{Prop_general} and the fact that
$\ell(x_j)\to\infty$. The claim (b) follows from (a) and the fact that matrix
coefficients of finite-dimensional unitary representations of a group do not vanish at
infinity (see for instance \cite[Rem.~2.15(iii)]{BM00}).
\end{proof}

\begin{Remark}
(i) The result of Theorem \ref{Theorem_groups}(a) amounts to the strong mixing
property of the unitary representation $U$ in $\ker(D)^\perp$ along the net
$\{x_j\}_{j\in J}$, as mentioned in the introduction. If the strong limit
\eqref{def_de_D} exists for all nets $\{x_j\}_{j\in J}$ with $x_j\to\infty$, then
Theorem \ref{Theorem_groups}(a) implies the usual strong mixing property of the
unitary representation $U$ in $\ker(D)^\perp$.

(ii) One can easily see that Theorem \ref{Theorem_groups} remains true if the scalars
$\ell(x_j)$ in \eqref{def_de_D} are replaced by $(f\circ\ell)(x_j)$, with
$f:\R_+\to\R_+$ any proper function. For simplicity, we decided to present only the
case $f=\id_{\R_+}$, but we note this additional freedom might be useful in
applications.
\end{Remark}

Theorem \ref{Theorem_groups} and Remark \ref{remark_pseudo} imply the following result
in the particular case of a compactly generated locally compact group $X$:

\begin{Corollary}[Compactly generated locally compact groups]\label{Corol_generated}
Let $X$ be a compactly generated locally compact Hausdorff group with generating set
$Y$ and word length function $\ell$, let $U:X\to\U(\H)$ be a unitary representation of
$X$, let $\{x_j\}_{j\in J}$ be a net in $X$ with $x_j\to\infty$, assume there exists a
self-adjoint operator $A$ in $\H$ such that $U(y)\in C^1(A)$ for each $y\in Y$, and
suppose that the strong limit
\begin{equation}\label{D_generating}
D:=\slim_j\frac1{\ell(x_j)}[A,U(x_j)]U(x_j)^{-1}
\end{equation}
exists. Then,
\begin{enumerate}
\item[(a)] $\lim_j\big\langle\varphi,U(x_j)\psi\big\rangle=0$ for all
$\varphi\in\ker(D)^\perp$ and $\psi\in\H$,
\item[(b)] $U$ has no nontrivial finite-dimensional unitary subrepresentation in
$\ker(D)^\perp$.
\end{enumerate}
\end{Corollary}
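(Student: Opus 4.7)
The plan is to deduce Corollary \ref{Corol_generated} directly from Theorem \ref{Theorem_groups}. Two things need to be verified: first, that the word length function $\ell$ associated to the generating set $Y$ is a proper length function on $X$ in the sense of (L1)-(L4); second, that $U(x_j)\in C^1(A)$ for every $j\in J$, whereas the hypothesis of the corollary only asserts this for the generators $y\in Y$.

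For the first point, I would invoke Remark \ref{remark_pseudo}: since $X$ is a compactly generated locally compact Hausdorff group, the word metric associated to a (relatively) compact symmetric generating set is a proper left-invariant pseudo-metric by \cite[Prop.~4.B.4(2)]{dd14}, and so the induced length function $\ell(x):=d(e,x)$ automatically satisfies the properties (L1)-(L4).

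For the second point, my strategy is to exploit the stability of the class $C^1(A)$ under the algebraic operations needed to recover $U(x_j)$ from the generators. Concretely, if $S,T\in C^1(A)$ then $ST\in C^1(A)$, as one sees by applying the Leibniz rule to the strongly differentiable map
$$
\R\ni t\mapsto\e^{-itA}ST\hspace{1pt}\e^{itA}
=\big(\e^{-itA}S\hspace{1pt}\e^{itA}\big)\big(\e^{-itA}T\hspace{1pt}\e^{itA}\big)\in\B(\H).
$$
Moreover, if $V\in C^1(A)$ is unitary, then $V^{-1}=V^{\hspace{1pt}*}\in C^1(A)$ by \cite[Prop.~5.1.6(a)]{ABG96}. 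Since $Y$ generates $X$, each $x_j$ admits a finite decomposition $x_j=y_1^{\varepsilon_1}\cdots y_{n_j}^{\varepsilon_{n_j}}$ with $y_k\in Y$ and $\varepsilon_k\in\{\pm1\}$, so $U(x_j)=U(y_1)^{\varepsilon_1}\cdots U(y_{n_j})^{\varepsilon_{n_j}}$ is a finite product of elements of $C^1(A)$, hence itself in $C^1(A)$.

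With these two preliminary observations in hand, all the hypotheses of Theorem \ref{Theorem_groups} are met, and its statement delivers both conclusions (a) and (b) of the corollary at once. The only mildly nontrivial point is the stability of $C^1(A)$ under finite products, but as indicated this follows immediately from the definition \eqref{eq_group} of $C^1(A)$ via the Leibniz rule; no genuine obstacle arises.
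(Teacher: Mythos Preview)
Your proposal is correct and follows essentially the same approach as the paper: verify via Remark~\ref{remark_pseudo} that the word length is a proper length function, and then use the stability of $C^1(A)$ under products and inverses (the paper cites \cite[Prop.~5.1.5 \& 5.1.6(a)]{ABG96} for this, while you spell out the Leibniz-rule argument) to upgrade $U(y)\in C^1(A)$ for generators to $U(x_j)\in C^1(A)$ for all $j$, before invoking Theorem~\ref{Theorem_groups}.
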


\begin{proof}
In order to apply Theorem \ref{Theorem_groups}, we first note from Remark
\ref{remark_pseudo} that the word length function $\ell$ is a proper length function.
Second, we note that $X=\bigcup_{n\ge1}(Y\cup Y^{-1})^n$. Therefore, for each $x\in X$
there exist $n\ge1$, $y_1,\ldots,y_n\in Y$ and $m_1,\ldots,m_n\in\{\pm1\}$ such that
$x=y_1^{m_1}\cdots y_n^{m_n}$. Thus,
$$
U(x)
=U\big(y_1^{m_1}\cdots y_n^{m_n}\big)
=U(y_1)^{m_1}\cdots U(y_n)^{m_n},
$$
and it follows from the inclusions $U(y_1),\ldots,U(y_n)\in C^1(A)$ and standard
results on commutator methods \cite[Prop.~5.1.5 \& 5.1.6(a)]{ABG96} that
$U(x)\in C^1(A)$. Thus, we have $U(x_j)\in C^1(A)$ for each $j\in J$, and the
commutators $[A,U(x_j)]$ appearing in \eqref{D_generating} make sense. So, we can
apply Theorem \ref{Theorem_groups} to conclude.
\end{proof}

\begin{Remark}\label{rem_Z}
Corollary \ref{Corol_generated} is a generalisation of \cite[Thm.~3.1]{Tie15_2} to the
case of unitary representations of compactly generated locally compact Hausdorff
groups. Indeed, if we let $X$ be the additive group $\Z$ with generating element $1$,
take the trivial net $\{x_j=j\}_{j\in\N^*}=\{N\mid N\in\N^*\}$, and set $U:=U(1)$ in
Corollary \ref{Corol_generated}, then the strong limit \eqref{D_generating} reduces to
$$
D=\slim_{N\to\infty}\frac1N\big[A,U^N\big]U^{-N}
=\slim_{N\to\infty}\frac1N\sum_{n=0}^{N-1}U^n\big([A,U]U^{-1}\big)U^{-n},
$$
which is the strong limit appearing in \cite[Thm.~3.1]{Tie15_2}. In Corollary
\ref{Corol_generated} we also removed the unnecessary invariance assumption
$\eta(D)\hspace{1pt}\dom(A)\subset\dom(A)$ for each
$\eta\in C^\infty_{\rm c}(\R\setminus\{0\})$. So, the strong mixing properties for
skew products and Furstenberg-type transformations established in
\cite[Sec.~3]{Tie15_2} and \cite[Sec.~3]{CT15} can be obtained more directly using
Corollary \ref{Corol_generated}.
\end{Remark}

In the next corollary we consider the case of a strongly continuous unitary
representation $U:\R^d\to\U(\H)$ of the Euclidean group $\R^d$, $d\ge1$. In such a
case Stone's theorem implies the existence of a family of mutually commuting
self-adjoint operators $H_1,\ldots,H_d$ such that $U(x)=\e^{-i\sum_{k=1}^dx_kH_k}$ for
each $x=(x_1,\ldots,x_d)\in\R^d$. Therefore, we give a criterion for strong mixing in
terms of the operators $H_1,\ldots,H_d$. We use the shorthand notations
$$
H:=(H_1,\ldots,H_d),
\quad\Pi(H):=(H_1+i)^{-1}\cdots(H_d+i)^{-1}
\quad\hbox{and}\quad x\cdot H:=\sum_{k=1}^dx_k H_k.
$$

\begin{Corollary}[Euclidean group $\R^d$]\label{Corol_R^d}
Let $\R^d$, $d\ge1$, be the Euclidean group with Euclidean length function $\ell$, let
$U:\R^d\to\U(\H)$ be a strongly continuous unitary representation of $\R^d$, let
$\{x_j\}_{j\in J}$ be a net in $\R^d$ with $x_j\to\infty$, assume there exists a
self-adjoint operator $A$ in $\H$ such that $(H_k-i)^{-1}\in C^1(A)$ for each
$k\in\{1,\ldots,d\}$, and suppose that the strong limit
\begin{equation}\label{D_dimension_d}
D:=\slim_j\frac1{\ell(x_j)}\int_0^{1}\d s\,\e^{-is(x_j\cdot H)}
\Pi(H)\big[i(x_j\cdot H),A\big]\Pi(H)^*\e^{is(x_j\cdot H)}
\end{equation}
exists. Then,
\begin{enumerate}
\item[(a)] $\lim_j\big\langle\varphi,U(x_j)\psi\big\rangle=0$ for all
$\varphi\in\ker(D)^\perp$ and $\psi\in\H$,
\item[(b)] $U$ has no nontrivial finite-dimensional unitary subrepresentation in
$\ker(D)^\perp$.
\end{enumerate}
\end{Corollary}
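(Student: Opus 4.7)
The plan is to establish (a) by a sandwiched adaptation of the proof of Proposition \ref{Prop_general}, and to deduce (b) from (a) exactly as in the proof of Theorem \ref{Theorem_groups}(b). The role of the regularizing factors $\Pi(H)$ and $\Pi(H)^*$ is to handle the domain issues arising from the fact that the sum $x_j\cdot H$ is not, a priori, of class $C^1(A)$.

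As a preparatory step, I would transfer the $C^1(A)$-regularity from the resolvents $(H_k-i)^{-1}$ to all objects appearing in the statement. Since each $H_k$ is of class $C^1(A)$, the unitary $\e^{-itH_k}$ lies in $C^1(A)$ for every $t\in\R$, and the mutual commutation of the $H_k$ together with closure of $C^1(A)$ under products (\cite[Prop.~5.1.5]{ABG96}) gives $U(x_j)=\prod_k\e^{-ix_{j,k}H_k}\in C^1(A)$ for every $j\in J$, as well as $\Pi(H),\Pi(H)^*\in C^1(A)$. A direct computation from \eqref{eq_group} shows that any $S\in C^1(A)$ satisfies $S\dom(A)\subset\dom(A)$; in particular $\Pi(H)^*\dom(A)\subset\dom(A)$.

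The heart of the proof is then to identify the strong limit and run the argument of Proposition \ref{Prop_general} in sandwiched form. Writing $H'_j:=x_j\cdot H$ and $B_j:=\ell(x_j)^{-1}[A,U(x_j)]U(x_j)^{-1}\in\B(\H)$, a Duhamel-type computation for the one-parameter group $\{\e^{-isH'_j}\}_{s\in\R}$ yields the bounded-operator equality
\begin{equation*}
\Pi(H)B_j\Pi(H)^*=\frac1{\ell(x_j)}\int_0^1\d s\,\e^{-isH'_j}\Pi(H)\big[iH'_j,A\big]\Pi(H)^*\e^{isH'_j},
\end{equation*}
the sandwiching by $\Pi(H)$ and $\Pi(H)^*$ (which commute with $\e^{\pm isH'_j}$) being what makes $[iH'_j,A]$ a bona fide bounded operator rather than a mere form. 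Hence the hypothesis \eqref{D_dimension_d} is equivalent to $D=\slim_j\Pi(H)B_j\Pi(H)^*$. For (a), take $\tilde\varphi\in\dom(A)$, set $\varphi:=D\tilde\varphi$ and let $\psi\in\dom(A)$. Using that $\Pi(H)^*$ commutes with $U(x_j)$, I split
\begin{equation*}
\big|\langle\varphi,U(x_j)\psi\rangle\big|\le\big\|(D-D_j)\tilde\varphi\big\|\,\|\psi\|+\big|\big\langle B_j\Pi(H)^*\tilde\varphi,U(x_j)\Pi(H)^*\psi\big\rangle\big|,
\end{equation*}
with $D_j:=\Pi(H)B_j\Pi(H)^*$. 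The first term vanishes by strong convergence. Because $\Pi(H)^*\tilde\varphi,\Pi(H)^*\psi\in\dom(A)$ by the preparatory step, and $U(x_j)^{\pm1}$ preserve $\dom(A)$, the calculation in the proof of Proposition \ref{Prop_general} bounds the second term by $\ell(x_j)^{-1}\big(\|A\Pi(H)^*\tilde\varphi\|\,\|\Pi(H)^*\psi\|+\|\Pi(H)^*\tilde\varphi\|\,\|A\Pi(H)^*\psi\|\big)\to 0$. Density of $D\dom(A)$ in $\ker(D)^\perp=\overline{D\H}$ and of $\dom(A)$ in $\H$ yields (a); claim (b) follows from (a) as in Theorem \ref{Theorem_groups}(b).

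The main obstacle is the Duhamel identification in the first displayed equation: since $H'_j$ is not assumed of class $C^1(A)$, the exchange of the integral with commutators and with $\Pi(H),\Pi(H)^*$ must be justified by approximation (e.g., via $A_\varepsilon=(i\varepsilon)^{-1}(\e^{i\varepsilon A}-1)$ and \eqref{eq_com_1}), using essentially that the sandwiching turns $[iH'_j,A]$ into a genuine bounded operator thanks to the stability $\Pi(H)^*\dom(A)\subset\dom(A)$.
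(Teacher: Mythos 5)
There is a genuine gap in your preparatory step: from $(H_k-i)^{-1}\in C^1(A)$ you infer that $\e^{-itH_k}\in C^1(A)$ for all $t$, and hence that $U(x_j)\in C^1(A)$. This implication is false in general. For an unbounded self-adjoint operator $H$ of class $C^1(A)$, the commutator $[H,A]$ is only a bounded operator from $\dom(H)$ to $\dom(H)^*$, and one cannot conclude that the unitary group $\e^{-itH}$ belongs to $C^1(A)$ (equivalently, $\e^{-itH}$ need not preserve $\dom(A)$); counterexamples of this type go back to Georgescu and G\'erard. Since your operator $B_j=\ell(x_j)^{-1}[A,U(x_j)]U(x_j)^{-1}$ only makes sense when $U(x_j)\in C^1(A)$, the whole subsequent argument --- the Duhamel identity, the identification of $D$ with $\slim_j\Pi(H)B_j\Pi(H)^*$, and the sandwiched estimate --- rests on an object that is not known to exist under the stated hypotheses. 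Your closing paragraph correctly senses that the Duhamel exchange is delicate, but the real obstruction sits earlier: $B_j$ itself is not defined.

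The paper circumvents this by changing the conjugate operator instead of sandwiching the commutator with the original $A$: it introduces $\widetilde A:=\Pi(H)A\hspace{1pt}\Pi(H)^*$ on $\dom(A)$, which is essentially self-adjoint because $\Pi(H)^*\dom(A)\subset\dom(A)$ (a fact you did establish correctly), and then shows via the approximants $A_\varepsilon$ and a Duhamel computation that $U(x_j)\in C^1(\widetilde A)$ with $\big[\widetilde A,U(x_j)\big]U(x_j)^{-1}$ equal to $\ell(x_j)$ times the integral in \eqref{D_dimension_d}. Theorem \ref{Theorem_groups} then applies verbatim with $A$ replaced by $\widetilde A$, and no sandwiched variant of Proposition \ref{Prop_general} is needed. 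To salvage your route, work with $\widetilde A$ from the outset; the integral identity you sketch is essentially the computation required to verify $U(x_j)\in C^1(\widetilde A)$.
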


\begin{proof}
The proof consists in applying Theorem \ref{Theorem_groups} with $A$ replaced by a
new operator $\widetilde A$ that we now define.

The inclusions $(H_1-i)^{-1},\ldots,(H_d-i)^{-1}\in C^1(A)$ and the standard result on
commutator methods \cite[Prop.~5.1.5]{ABG96} imply that $\Pi(H)^*\in C^1(A)$. So, we
have $\Pi(H)^*\hspace{1pt}\dom(A)\subset\dom(A)$, and the operator
$$
\widetilde A\hspace{1pt}\varphi
:=\Pi(H)A\hspace{1pt}\Pi(H)^*\varphi,\quad\varphi\in\dom(A),
$$
is essentially self-adjoint (see \cite[Lemma~7.2.15]{ABG96}). Take $\varphi\in\dom(A)$
and $j_0\in J$ such that $\ell(x_j)>0$ for all $j\ge j_0$, and define for
$\varepsilon\in\R\setminus\{0\}$ the operator
$A_\varepsilon:=(i\varepsilon)^{-1}(\e^{i\varepsilon A}-1)$. Then, we have
\begin{align}
&\big\langle\widetilde A\hspace{1pt}\varphi,U(x_j)\varphi\big\rangle
-\big\langle\varphi,U(x_j)\widetilde A\hspace{1pt}\varphi\big\rangle\nonumber\\
&=\lim_{\varepsilon\searrow0}\big(\big\langle\varphi,
\Pi(H)A_\varepsilon\Pi(H)^*\e^{-i(x_j\cdot H)}\varphi\big\rangle
-\big\langle\varphi,\e^{-i(x_j\cdot H)}\Pi(H)
A_\varepsilon\Pi(H)^*\varphi\big\rangle\big)\nonumber\\
&=\lim_{\varepsilon\searrow0}\int_0^{\ell(x_j)}\d q\,\frac\d{\d q}\;\!
\big\langle\varphi,\e^{i(q-\ell(x_j))(x_j\cdot H)/\ell(x_j)}\Pi(H)
A_\varepsilon\Pi(H)^*\e^{-iq(x_j\cdot H)/\ell(x_j)}\varphi\big\rangle\nonumber\\
&=\frac1{\ell(x_j)}\lim_{\varepsilon\searrow0}\int_0^{\ell(x_j)}\d q\,
\big\langle\varphi,\e^{i(q-\ell(x_j))(x_j\cdot H)/\ell(x_j)}\Pi(H)
\big[i(x_j\cdot H),A_\varepsilon\big]\Pi(H)^*\e^{-iq(x_j\cdot H)/\ell(x_j)}
\varphi\big\rangle.\label{eq_lim_int}
\end{align}
But, $(H_1-i)^{-1},\ldots,(H_d-i)^{-1}\in C^1(A)$. Therefore, \eqref{eq_com_1} and
\eqref{eq_com_2} imply that
$$
\slim_{\varepsilon\searrow0}\Pi(H)\big[i(x_j\cdot H),A_\varepsilon\big]\Pi(H)^*
=\Pi(H)\big[i(x_j\cdot H),A\big]\Pi(H)^*,
$$
and we can exchange the limit and the integral in \eqref{eq_lim_int} to obtain
\begin{align*}
&\big\langle\widetilde A\hspace{1pt}\varphi,U(x_j)\varphi\big\rangle
-\big\langle\varphi,U(x_j)\widetilde A\hspace{1pt}\varphi\big\rangle\\
&=\frac1{\ell(x_j)}\int_0^{\ell(x_j)}\d q\,\big\langle\varphi,
\e^{i(q-\ell(x_j))(x_j\cdot H)/\ell(x_j)}\Pi(H)\big[i(x_j\cdot H),A\big]\Pi(H)^*
\e^{-iq(x_j\cdot H)/\ell(x_j)}\varphi\big\rangle\\
&=\frac1{\ell(x_j)}\int_0^{\ell(x_j)}\d r\,\big\langle\varphi,
\e^{-ir(x_j\cdot H)/\ell(x_j)}\Pi(H)\big[i(x_j\cdot H),A\big]\Pi(H)^*
\e^{i(r-\ell(x_j))(x_j\cdot H)/\ell(x_j)}\varphi\big\rangle\\
&=\int_0^1\d s\,\big\langle\varphi,
\e^{-is(x_j\cdot H)}\Pi(H)\big[i(x_j\cdot H),A\big]\Pi(H)^*
\e^{is(x_j\cdot H)}U(x_j)\hspace{1pt}\varphi\big\rangle\\
&=\big\langle\varphi,\ell(x_j)D_jU(x_j)\hspace{1pt}\varphi\big\rangle
\end{align*}
with
\begin{align*}
D_j&:=\frac1{\ell(x_j)}\int_0^{1}\d s\,\e^{-is(x_j\cdot H)}
\Pi(H)\big[i(x_j\cdot H),A\big]\Pi(H)^*\e^{is(x_j\cdot H)}.
\end{align*}
Since $\dom(A)$ is a core for $\widetilde A$, this implies that
$U(x_j)\in C^1(\widetilde A)$ with
$\big[\widetilde A,U(x_j)\big]=\ell(x_j)D_j\hspace{1pt}U(x_j)$. Therefore, we have
$$
D_j=\frac1{\ell(x_j)}\big[\widetilde A,U(x_j)\big]U(x_j)^{-1},
$$
and all the assumptions of Theorem \ref{Theorem_groups} are satisfied with $A$
replaced by $\widetilde A$.
\end{proof}

\begin{Remark}\label{rem_R}
Corollary \ref{Corol_R^d} is a generalisation of \cite[Thm.~4.1]{Tie15_2} to the case
of strongly continuous unitary representations of $\R^d$ for an arbitrary $d\ge1$.
Indeed, if we set $d=1$, write $H$ for $H_1$, and take the trivial net
$\{x_j=j\}_{j\in(0,\infty)}=\{t\mid t>0\}$ in Corollary \ref{Corol_R^d}, then the
strong limit \eqref{D_dimension_d} reduces to
\begin{align*}
D&=\slim_{t\to\infty}\frac1t\int_0^1\d s\,\e^{-is(t\cdot H)}
\big(H+i\big)^{-1}\big[itH,A\big]\big(H-i\big)^{-1}\e^{is(t\cdot H)}\\
&=\slim_{t\to\infty}\frac1t\int_0^t\d s\,\e^{-isH}\big(H+i\big)^{-1}
\big[iH,A\big]\big(H-i\big)^{-1}\e^{isH},
\end{align*}
which is (up to a sign) the strong limit appearing in \cite[Thm.~4.1]{Tie15_2}. In
Corollary \ref{Corol_R^d}, we also removed the unnecessary invariance assumption
$\eta(D)\hspace{1pt}\dom(A)\subset\dom(A)$ for each
$\eta\in C^\infty_{\rm c}(\R\setminus\{0\})$. So, the strong mixing properties for
adjacency operators, time changes of horocycle flows, etc., established in
\cite[Sec.~4]{Tie15_2} can be obtained more directly using Corollary \ref{Corol_R^d}.
\end{Remark}

To conclude, we add to the list of examples presented in \cite{Tie15_2} an application
which was not possible to cover with the results of \cite{Tie15_2}. It is a short
alternative proof (not using convolutions) of the strong mixing property of the left
regular representation of $\sigma$-compact locally compact Hausdorff groups:

\begin{Example}[Left regular representation]\label{ex_regular}
Let $X$ be a $\sigma$-compact locally compact Hausdorff group with left Haar measure
$\mu$ and proper length function $\ell$ (see Remark \ref{remark_pseudo}). Let
$\D\subset\H$ be the set of functions $X\to\C$ with compact support, and let
$U:X\to\U(\H)$ be the left regular representation of $X$ on $\H:=\ltwo(X,\mu)$
given by
$$
U(x)\varphi:=\varphi(x^{-1}\;\!\cdot\;\!),\quad x\in X,~\varphi\in\H,
$$
Let finally $A$ be the maximal multiplication operator in $\H$ given by
$$
A\varphi:=\ell\;\!\varphi\equiv \ell(\cdot)\varphi,
\quad\varphi\in\dom(A):=\big\{\varphi\in\H\mid\|\ell\;\!\varphi\|<\infty\big\}.
$$
For $\varphi\in\D$ and $x\in X$, one has
$$
AU(x)\varphi-U(x)A\varphi
=\big(\ell(\;\!\cdot\;\!)-\ell(x^{-1}\;\!\cdot\;\!)\big)U(x)\varphi.
$$
Furthermore, the properties (L2)-(L3) of a length function imply that
\begin{equation}\label{eq_bound}
\big|\big(\ell(\;\!\cdot\;\!)-\ell(x^{-1}\;\!\cdot\;\!)\big)\big|\le\ell(x).
\end{equation}
Therefore, since $\D$ is dense in $\dom(A)$, it follows that $U(x)\in C^1(A)$ with
$$
[A,U(x)]U(x)^{-1}=\ell(\;\!\cdot\;\!)-\ell(x^{-1}\;\!\cdot\;\!).
$$
Now, we take $\{x_j\}_{j\in J}$ a net in $X$ with $x_j\to\infty$, and show that
\begin{equation}\label{eq_D_minus_1}
D:=\slim_j\frac1{\ell(x_j)}[A,U(x_j)]U(x_j)^{-1}=-1.
\end{equation}
For this, we first note that for $\varphi\in\H$ we have
$$
\left(\frac1{\ell(x_j)}[A,U(x_j)]U(x_j)^{-1}+1\right)\varphi
=\frac{\ell(\;\!\cdot\;\!)-\ell(x_j^{-1}\;\!\cdot\;\!)+\ell(x_j)}{\ell(x_j)}
\;\!\varphi.
$$
Next, we note that \eqref{eq_bound} implies that
$$
\left|\frac{\ell(\;\!\cdot\;\!)-\ell(x_j^{-1}\;\!\cdot\;\!)+\ell(x_j)}{\ell(x_j)}
\;\!\varphi\right|^2
\le4\;\!|\varphi|^2
\in\lone(X,\mu),
$$
and that the properties (L2)-(L3) imply that
$$
\lim_j\left|\frac{\ell(\;\!\cdot\;\!)-\ell(x_j^{-1}\;\!\cdot\;\!)+\ell(x_j)}
{\ell(x_j)}\;\!\varphi\right|^2
\le\lim_j\left|\frac{2\;\!\ell(\;\!\cdot\;\!)}{\ell(x_j)}\;\!\varphi\right|^2
=0
\quad\hbox{$\mu$-almost everywhere.}
$$
Therefore, we can apply Lebesgue dominated convergence theorem to get the equality
$$
\slim_j\left(\frac1{\ell(x_j)}[A,U(x_j)]U(x_j)^{-1}+1\right)\varphi=0,
$$
which proves \eqref{eq_D_minus_1}. So, Theorem \ref{Theorem_groups} applies with
$D=-1$, and thus $\lim_j\big\langle\varphi,U(x_j)\psi\big\rangle=0$ for all
$\varphi,\psi\in\H$.
\end{Example}

%--------------------------------------------------------------------------------------
%\bibliography{../bibliographie/bibliographie}
%--------------------------------------------------------------------------------------

\def\cprime{$'$} \def\polhk#1{\setbox0=\hbox{#1}{\ooalign{\hidewidth
  \lower1.5ex\hbox{`}\hidewidth\crcr\unhbox0}}}
  \def\polhk#1{\setbox0=\hbox{#1}{\ooalign{\hidewidth
  \lower1.5ex\hbox{`}\hidewidth\crcr\unhbox0}}}
  \def\polhk#1{\setbox0=\hbox{#1}{\ooalign{\hidewidth
  \lower1.5ex\hbox{`}\hidewidth\crcr\unhbox0}}} \def\cprime{$'$}
  \def\cprime{$'$} \def\polhk#1{\setbox0=\hbox{#1}{\ooalign{\hidewidth
  \lower1.5ex\hbox{`}\hidewidth\crcr\unhbox0}}}
  \def\polhk#1{\setbox0=\hbox{#1}{\ooalign{\hidewidth
  \lower1.5ex\hbox{`}\hidewidth\crcr\unhbox0}}} \def\cprime{$'$}
  \def\cprime{$'$} \def\cprime{$'$}

%--------------------------------------------------------------------------------------

\end{document}